\numberwithin{equation}{section}
\numberwithin{figure}{section}
\theoremstyle{plain}
\newtheorem{thm}{\protect\theoremname}
  \theoremstyle{plain}
  \newtheorem{prop}[thm]{\protect\propositionname}
  \theoremstyle{plain}
  \newtheorem*{cor*}{\protect\corollaryname}
  \theoremstyle{plain}
  \newtheorem*{thm*}{\protect\theoremname}
  \theoremstyle{plain}
  \newtheorem*{prop*}{\protect\propositionname}
  \providecommand{\corollaryname}{Corollary}
  \providecommand{\propositionname}{Proposition}
  \providecommand{\theoremname}{Theorem}
\providecommand{\theoremname}{Theorem}
\begin{document}
\newcommand{\TD}{\mathbb{T}_{d}}
\newcommand{\ZD}{\mathbb{Z}^{d}}

\title{The Mutating Contact Process: Model Introduction and Qualitative
Analysis of Phase Transitions in its Survival}

\author{Idan Alter\thanks{Bar Ilan University, alteridan@gmail.com} \and
Gideon Amir\thanks{Bar Ilan University, amirgi@macs.biu.ac.il}}
\maketitle
\begin{abstract}
We introduce and study the mutating contact process, a variant of
the multitype contact process, where one type mutates at a constant
rate to the other type. We prove that on $\mathbb{Z}$ a single mutant
cannot survive while on $\TD$ there are distinct weak survival and
extinction of a single mutant phases, yet the limiting distribution
concentrates on configurations with no mutants of the first type for
any values of the parameters.
\end{abstract}

\section{Introduction and Results}

In this paper we wish to introduce and study a variant of the multitype
contact process introduced by Neuhauser in \cite{Neuhauser1992}.
In that model, two populations, say mutation strains of a virus, are
in competition for space on a certain graph ( the lattice $\ZD$ or
the regular tree $\TD$ most usually): each member of population $i$
reproduces to its nearest neighbors at rate ${\lambda _i}$( $i=1,2$)
if they are unoccupied and dies at rate 1. 

The variant which we wish to study henceforward is the mutating contact
process: in it each individual of the population reproduces at rate
1 to nearest neighbors if they are unoccupied, dies at rate $\delta$
and mutates to a completely novel strain at rate $\mu$. The scope
of our discussion will be limited to the question of the survival
of a single strain: under what conditions does a single strain overcome
death and mutation pressures to persist in existence on the chosen
graph. To this end it suffices to consider the model with only two
types: 1. A specific strain. 2. All other strains. 

With this in mind, the model description is as follows. The system
is described by an evolving configuration $\xi\in{{\left\{ 0,1,2\right\} }^{S}}$
where $\xi \left( a \right)=0$ if the site $a\in S$ is vacant, $\xi \left( a \right)=1$
if it is occupied by a mutant of the specific strain whose survival
we study and $\xi \left( a \right)=2$ if $a$ is occupied by a mutant
of any other strain. Denote by $n\left( x,\xi ,i \right)$ the number
of neighbors of $x$ that are of type $i$ in the configuration $\xi$
, That is,
\[
\ensuremath{n\left(x,\xi,i\right)=\sum\limits _{\begin{smallmatrix}y\sim x\\
\xi\left(y\right)=i
\end{smallmatrix}}1,\,\,\,i=1,2}.
\]
The mutating contact process ${{\xi}_{t}}$ is the Feller process
on ${{\left\{ 0,1,2\right\} }^{S}}$ which makes local transitions
at site $x\in S$ according to the following rules: 
\begin{enumerate}
\item $i\to0\,\,\,\text{at rate }\delta$, $i=1,2$.
\item $0\to i\,\,\,\text{at rate }n\left(x,\xi,i\right)$, $\,i=1,2$. 
\item $1\to2\,\,\text{at rate }\mu$. 
\end{enumerate}
The multitype contact process introduced by Neuhauser \cite{Neuhauser1992}
follows similar rules of evolution: It is a Feller process defined
on ${{\left\{ 0,1,2\right\} }^{S}}$ evolving at $x\in S$ according
to the rules :
\begin{enumerate}
\item $i\to0\,\,\,\text{at rate 1}$, $i=1,2$.
\item $0\to i\,\,\,\text{at rate }{{\lambda}_{i}}n\left(x,\xi,i\right)$,
$\,i=1,2$.
\end{enumerate}
The variations in our model being the introduction of the third evolution
rule, modelling mutation, the requirement that all strains reproduce
at the same constant rate 1, modelling selective neutrality, and the
introduction of a varying death rate instead of a varying birth rate,
which is more amenable to our analysis as will be seen in what follows.

The one-type contact process (obtained for instance by initializing
the two-type process with only one of the types) has been studied
extensively, see \cite{liggett2013stochastic} for a review on the
main results. However, there are relatively few papers on the multitype
contact process, with \cite{Neuhauser1992,Cox2009,Andjel2010} as
representative examples. Further, a review paper by Durrett \cite{durrett2009coexistence}
summarized the results and open questions in spatial competition models. 

in this paper we wish to study the phases of survival of the mutating
contact process. On $\mathbb{{Z}}$, the situation is straightforward
- a single mutant always dies while the entire process behaves exactly
as a single type contact process. The first of these assertions is
proved in the following theorem. In what follows, we will denote the
set of all vertices occupied by type $i$ in the mutating contact
process by $^{i}{{\xi}_{t}}$, that is, $^{i}{{\xi}_{t}}=\left\{ x\in S|{{\xi}_{t}}\left(x\right)=i\right\} $.
\begin{thm}
\label{thm: no strong survival on Z}for the mutating contact process
on $S=\mathbb{{Z}}$, with any finite initial configuration of 1s,
and any choice of death and mutation rates, $\delta,\mu>0$, no single
mutant can survive: 
\[
P\left(^{1}{{\xi}_{t}}\ne\varnothing,\,\forall t>0\right)=0.
\]
\end{thm}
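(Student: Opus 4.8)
The plan is to reduce the statement to a first-moment estimate. Write $\eta_t:={}^1\xi_t\cup{}^2\xi_t$ for the set of occupied sites. Since mutation (rule 3) does not change occupancy and rules 1--2 act on the occupancy variable $\mathbf 1[\xi_t(x)\neq 0]$ exactly as the transitions of the basic contact process (recovery at rate $\delta$; infection of a vacant site at rate equal to its number of occupied neighbours), the process $\eta_t$ is the contact process on $\mathbb Z$ with infection rate $1$ per directed edge and recovery rate $\delta$, started from the finite set $A:={}^1\xi_0$, say $A\subseteq[-m,m]$. As a type-$1$ site can only be produced by infection from another type-$1$ site, the event $\{{}^1\xi_t=\varnothing\}$ is absorbing, so $t\mapsto\{{}^1\xi_t\neq\varnothing\}$ is non-increasing and it suffices to prove $P({}^1\xi_t\neq\varnothing)\to 0$ as $t\to\infty$.

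For this I would use the graphical representation built from independent Poisson processes on the time-lines $\mathbb Z\times[0,\infty)$: recovery marks at rate $\delta$, mutation marks at rate $\mu$, and, along each directed edge, infection arrows at rate $1$. For $x\in\eta_t$, define the canonical ancestral path $\gamma_x\subseteq\mathbb Z\times[0,t]$ by following $x$'s time-line downward from time $t$ to the last moment $x$ was infected from a then-occupied neighbour $y$ (or to time $0$), jumping to $y$, and repeating. One checks, working downward, that $\gamma_x$ is a recovery-free infection path of total time-length exactly $t$ ending at a site of $A$, that $x$'s occupation at time $t$ is inherited along $\gamma_x$, and hence that $x$ is of type $1$ at time $t$ only if $\gamma_x$ carries no mutation mark (a mutation mark on $\gamma_x$ turns the inherited type to $2$ irreversibly). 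Crucially, $\gamma_x$ is a function of the recovery marks and infection arrows alone and is independent of the mutation marks.

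Condition now on the recovery marks and infection arrows, i.e. on the whole trajectory of $\eta$. Then $\eta_t$ and the at most $|\eta_t|$ distinct paths $\{\gamma_x:x\in\eta_t\}$ are determined, each $\gamma_x$ has time-length $t$, and since the mutation marks form an independent rate-$\mu$ Poisson process on the time-lines, each fixed $\gamma_x$ is mutation-free with conditional probability $e^{-\mu t}$. If ${}^1\xi_t\neq\varnothing$ then some $\gamma_x$ is mutation-free, so by a union bound $P({}^1\xi_t\neq\varnothing\mid\eta)\le|\eta_t|\,e^{-\mu t}$, whence $P({}^1\xi_t\neq\varnothing)\le e^{-\mu t}\,E|\eta_t|$. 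Here one-dimensionality enters: an infection arrow joins only nearest neighbours, so the running maximum $R_t:=\sup\bigcup_{s\le t}\eta_s$ increases by unit jumps at rate at most $1$, giving $E[R_t]\le m+t$, and symmetrically $E\bigl[-\inf\bigcup_{s\le t}\eta_s\bigr]\le m+t$; as $\eta_t$ is contained in the interval between these, $E|\eta_t|\le 2t+2m+1$. Therefore $P({}^1\xi_t\neq\varnothing)\le(2t+2m+1)e^{-\mu t}\to0$, and with the first paragraph this proves the theorem.

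The step I expect to be the main obstacle is the graphical bookkeeping in the second paragraph: one must define the construction so that the three types evolve with the prescribed rates, and then verify carefully that every site's occupation (and hence its type) is inherited along its single canonical ancestral path, so that a mutation mark anywhere on that path is fatal to type $1$; the rest of the argument is soft. It is worth noting that this is a genuinely one-dimensional argument, as the bound $E|\eta_t|\le 2t+2m+1$ has no analogue on $\TD$, where the contact process can grow exponentially --- consistent with the qualitatively different behaviour announced there.
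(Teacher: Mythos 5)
Your proof is correct, but it takes a genuinely different and more elementary route than the paper's. The paper restricts to $\delta<\delta_*$ (the other case being handled by domination), uses mutations at the extremal 1s together with Corollary 2.4 of Andjel--Miller--Pardoux to sandwich the 1s between two linearly escaping infinite 2-paths, and then invokes their Theorem 3.9 via a coupling with the two-type contact process. You instead run a first-moment computation: every type-1 site at time $t$ must sit atop a mutation-free ancestral line of temporal length exactly $t$; since that line is measurable with respect to the recovery and infection marks alone, it is mutation-free with conditional probability $e^{-\mu t}$; and on $\mathbb{Z}$ the number of candidate sites has expectation at most $2t+2m+1$ by the unit-speed bound on the running extremes. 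I checked the bookkeeping step you flagged: the canonical ancestral path is well defined (a.s.\ a unique arrow causes each vacant-to-occupied transition), on each segment between successive infections the type can change only by mutation because reinfection of an occupied site has no effect under rule 2, and the path is a deterministic functional of the recovery/infection marks, so the conditional $e^{-\mu t}$ estimate is legitimate; combined with the observation that $\{{}^1\xi_t=\varnothing\}$ is absorbing, the theorem follows. What your approach buys: it is self-contained (no import of the two Andjel et al.\ results), needs no case split on $\delta$, and yields the quantitative rate $P\left({}^1\xi_t\ne\varnothing\right)\le\left(2t+2m+1\right)e^{-\mu t}$, which the paper's argument does not provide; the paper's route, in exchange, situates the result within the interface theory of the two-type process. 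One small caveat: you implicitly take the initial configuration to consist of 1s only, setting $\eta_0=A={}^1\xi_0$. If initial 2s are permitted (possibly infinitely many), your bound on $E\left|\eta_t\right|$ fails as stated, but the argument survives by restricting the union bound to those $x$ whose ancestral path terminates in ${}^1\xi_0$; these lie in the set of points reachable from $\left[-m,m\right]\times\{0\}$ by open paths, whose expected cardinality obeys the same linear bound. Also beware a notational clash: the paper already uses $\eta_t$ for the two-type contact process on $\mathbb{Z}$.
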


This result builds upon the results of Andjel et. al. \cite{Andjel2010}
in which it is shown that for the multitype contact process with equal
birth rates for the two strains, initialized with a finite number
of 1s bounded from both sides by an infinite number of 2s, the survival
of the 1s is impossible in the same manner as stated above. In the
proof of Theorem 1 we will in effect reduce our problem to theirs. 

The behaviour of the model on $\TD$ is more involved - while there
are distinct weak and strong survival phases for the one type contact
process, it turns out that there is a weak survival phase for a single
mutant in the mutating contact process as well, though strong survival
is an impossibility, these are the results of the next thereom and
the two propositions that folllow it. 
\begin{thm}
\label{thm:1 dies on TD}For the mutating contact process on $S=\TD$
or $S=\ZD$ with any initial configuration, and any choice of $\delta,\mu>0$,
type 1 mutants do not survive in the limiting distribution, that is,
for all $x\in S$,
\[
P\left({{\xi}_{t}}\left(x\right)=1\right)\to0\text{ as }\ensuremath{t\to\infty.}
\]
\end{thm}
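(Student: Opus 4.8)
The plan is to exploit the fact that type-1 mass can only decrease through the transitions $1\to 0$ and $1\to 2$, while the *only* way a site ever becomes type 1 is through reproduction by an existing type-1 neighbor (transition rule~2 with $i=1$); there is no mutation *into* type 1. So the collection $^{1}\xi_t$ is, in a suitable sense, dominated by a contact process in which every particle additionally suffers an extra killing at rate $\mu$. Concretely, I would build the mutating contact process on a graphical representation (Harris construction) with independent Poisson processes for $\delta$-deaths, for $\mu$-mutations, and for $1$-arrows (births of each type along each oriented edge), and observe that at every site $x$, the event $\{\xi_t(x)=1\}$ forces the existence of an infected path in this graphical representation from the initial set of $1$'s (or from a later-created $1$, which itself traces back along such a path) that uses only $1$-arrows and avoids all $\delta$-marks *and* all $\mu$-marks along its temporal segments. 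Hence $^{1}\xi_t$ is stochastically dominated by the set of infected sites in a one-type contact process with birth rate $1$ and total death rate $\delta+\mu$.

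The key step is then a standard fact about the subcritical/critical one-type contact process: on $\ZD$ or $\TD$, if the effective death rate is large enough relative to the birth rate the process dies out, but — and this is the point — we need the conclusion $P(\xi_t(x)=1)\to 0$ for \emph{all} $\delta,\mu>0$, including when $\delta$ is tiny. The domination by a contact process with death rate $\delta+\mu$ alone does not kill it when $\delta+\mu$ is below the critical threshold. So a cruder domination is not enough, and the main obstacle is precisely to handle the supercritical-looking regime. The resolution is that type-$1$ particles are not merely killed at rate $\mu$: once a site mutates to $2$ it is \emph{permanently barred} from being $1$ again unless re-infected by a genuine $1$-neighbor, and meanwhile type-$2$ occupation actively blocks type-$1$ reproduction onto that site (rule~2 only fires onto vacant sites). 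This is the same mechanism as in Andjel et al.~\cite{Andjel2010}: the $1$'s are a finite (or locally finite) island competing against a $2$-population that they themselves feed, and they cannot reclaim lost ground.

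Therefore I would run the following argument. Fix $x$ and suppose for contradiction that $\liminf_t P(\xi_t(x)=1)=c>0$. Using the graphical representation, the event $\{\xi_t(x)=1\}$ requires a type-$1$ infection path $\gamma$ reaching $x$ at time $t$; track $\gamma$ backwards. Along $\gamma$'s lifetime, at the random times the $\mu$-clock rings at the currently-occupied site the path must have just been (re-)created by a $1$-birth from elsewhere, and each such ancestor is itself a type-$1$ site at that moment. One shows that the expected number of such type-$1$ sites present at time $t$ is summable/bounded in a way incompatible with $c>0$: precisely, by a mass-transport or direct computation on $\TD$ (using transience / the exponential growth of the tree together with the $\mu$-killing) and on $\ZD$ (using Theorem~\ref{thm: no strong survival on Z} and translation-invariance plus the recurrence structure), the type-$1$ population's influence decays. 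Concretely on $\ZD$: by Theorem~\ref{thm: no strong survival on Z} applied on each line through $\ZD$ — or rather by coupling $^1\xi_t$ restricted to the behavior that matters with the one-dimensional result and using the finite-initial-seed reduction to Andjel et al. — one gets that from any finite seed the $1$'s die out a.s., and then a standard approximation (any configuration is an increasing limit of finite ones, and $^1\xi_t$ is monotone in the initial $1$-set under the coupling where extra $1$'s are added while $2$'s are only removed) upgrades this to $P(\xi_t(x)=1)\to 0$ for arbitrary initial configurations. On $\TD$ I expect the argument to instead quote the weak-survival analysis that follows (the propositions referenced in the excerpt): even in the weak survival phase the $1$'s wander off to infinity, so $P(\xi_t(x)=1)\to 0$ pointwise. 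The hardest part, as flagged, is making the backward-path/ancestry bookkeeping on the tree rigorous — controlling how a single $\mu$-mark can spawn a whole subtree of type-$2$ that then competes with the type-$1$ front — and I would isolate that as the main lemma.
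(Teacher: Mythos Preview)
Your diagnosis of the easy regime $\delta+\mu>\delta^{*}$ via the coupling with a one-type contact process at death rate $\delta+\mu$ is correct and matches the paper. Your identification of the real difficulty --- the supercritical-looking regime $\delta+\mu<\delta^{*}$ --- is also right. But the argument you sketch for that regime does not work, for several concrete reasons.

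First, the monotonicity you invoke (``$^{1}\xi_{t}$ is monotone in the initial $1$-set'') is false. Take two initial configurations differing only at a site $y$: in $A$ the site is vacant, in $B$ it carries a $1$. Run both on the same graphical construction. If a $\mu$-mark hits $y$ before anything else, $B$ now has a $2$ at $y$; if shortly thereafter an infection arrow points into $y$ from a genuine $1$-neighbor $z$, then $A$ acquires a $1$ at $y$ while $B$'s $2$ blocks it. Thus $^{1}\xi_{t}^{A}\not\subseteq{}^{1}\xi_{t}^{B}$. The approximation of arbitrary initial data by finite $1$-seeds therefore cannot be justified this way.

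Second, Theorem~\ref{thm: no strong survival on Z} is a statement about $\mathbb{Z}$, not $\ZD$. Restricting to a line through $\ZD$ does not give a one-dimensional mutating contact process: infections arrive from off the line, so the Andjel--Miller--Pardoux result does not transfer. Your $\ZD$ argument has no content for $d\ge 2$.

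Third, on $\TD$ you defer to ``the weak-survival analysis that follows,'' but Propositions~\ref{prop:GW survival} and~\ref{prop:dominated death} concern only whether $^{1}\xi_{t}=\varnothing$ eventually; they say nothing about the pointwise decay $P(\xi_{t}(x)=1)\to 0$ in the weak-survival phase, which is exactly the hard case. Weak survival is compatible with the $1$'s escaping to infinity, but that escape is the thing to be proved, not assumed.

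What the paper actually does, and what your closing sentence gestures at without carrying out, is a dual (ancestor-process) argument in the style of Cox--Schinazi. One builds the ordered ancestor list $\overline{\xi}_{s}^{x}$ of $(x,t)$ and tracks, for each potential ancestor, whether the path back to it has crossed a $\mu$-mark (making it ``$2$-viable'' only). The key lemma is that the \emph{primary} ancestor becomes $2$-viable after a geometrically distributed number of attempts: with probability bounded below, during a unit interval the current primary ancestor sees a $\mu$-mark and nothing else. Once this happens and its ancestry survives a further $T$ units, the head of the ancestor list consists entirely of $2$-viable sites distributed (in law) as a one-type contact process started from a point. By the complete convergence theorem and Proposition~1 of Cox--Schinazi, such a set intersects the (eventually large) set $^{2}\xi_{u}$ with probability tending to one, forcing $\xi_{t+u}(x)\ne 1$. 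This mechanism --- geometric waiting for a mutation on the primary ancestor, then intersection with $^{2}\xi$ via the upper invariant measure --- is the missing idea in your proposal; the back-of-envelope reductions you offer in its place do not close the gap.
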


The proof of this theorem follows lines similar to the proof of Theorem
2 in Cox and Schinazi \cite{Cox2009}, which states that for the multitype
contact process on $\TD$ or $\ZD$ at the phase of strong survival
of 2s, there can be no coexistence in the limiting distribution. 

The main idea behind the proof is that for ${{\xi}_{t}}\left(x\right)=1$
to occur, it must have descended from an initial 1 without mutating.
As $\ensuremath{t\to\infty}$ it becomes very likely that the ancestor
of $x$ has mutated along its path, and so ${{\xi}_{t}}\left(x\right)=1$
becomes unlikely. 

We will further show that with small enough $\delta$ and $\mu$,
weak survival of 1s on $\TD$ is a possibility, this we state as a
proposition and its proof follows from observing an embedded supercritical
Galton-Watson tree.
\begin{prop}
\label{prop:GW survival}for the mutating contact process on ,$S=\TD$
with any initial finite configuration such that$\left\{ x:{{\xi}_{0}}\left(x\right)=1\right\} \ne\varnothing$,
a single mutant can survive in the weak sense,
\[
P\left(^{1}{{\xi}_{t}}\ne\varnothing,\,\forall t>0\right)>0,
\]
if $\delta+\mu$ is sufficiently small.
\end{prop}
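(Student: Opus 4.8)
The plan is to couple the type-1 cloud with a Galton--Watson process that is supercritical when $\delta+\mu$ is small, and to exploit the tree structure of $\TD$ to keep the offspring roughly independent. Fix an initial type-1 site, say the root $o$. I would track only the type-1 particles, ignoring entirely what types $0$ and $2$ do elsewhere: this is legitimate because a type-1 particle reproduces at rate $1$ per empty neighbour, dies at rate $\delta$, and turns into type $2$ at rate $\mu$; the only way the presence of other particles can \emph{hurt} the type-1 cloud is by occupying neighbouring sites and thereby blocking reproduction. So from a type-1 particle's point of view, the worst case for survival is that all its neighbours are always occupied except for those currently held by type-1 descendants. On $\TD$ this suggests restricting attention to the type-1 particles that occupy an embedded subtree: declare a type-1 particle at vertex $x$ ``active'' together with the first type-1 particle (if any) that it sends to each of its $d$ children before $x$ itself dies or mutates; erase everything else. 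What remains, after a time change, is a branching structure on $\TD$ in which a particle's lifetime is $\mathrm{Exp}(\delta+\mu)$ and during that lifetime it independently attempts to infect each of its $d$ children, each attempt succeeding (i.e. producing a surviving child particle before it dies) with some probability $p = p(\delta,\mu) > 0$.

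Next I would compute, or at least lower-bound, the offspring mean. A type-1 particle at $x$ with an empty child $y$ infects $y$ at rate $1$; the competing clocks (death at rate $\delta$, mutation at rate $\mu$) ring first with probability $(\delta+\mu)/(1+\delta+\mu)$, so the probability that $x$ infects a \emph{given} child before $x$ disappears is at least $1/(1+\delta+\mu)$. One subtlety: the child $y$ may already be occupied by type $2$ (or $0$) when $x$ is born, and could stay occupied. To handle this cleanly I would instead run the following dominating-from-below construction: ignore the child $y$ entirely unless $y$ is empty; when $y$ is empty, $x$ infects it at rate $1$. Since type-$2$ and type-$0$ particles at $y$ die at rate $\delta>0$, the site $y$ is empty for a positive fraction of time, and a routine comparison (a child-site is reinfectable after an exponential delay) shows the infection probability is bounded below by some $q(\delta,\mu)>0$ with $q(\delta,\mu)\to 1$ as $\delta+\mu\to 0$. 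Hence the mean number of surviving children dominates a Galton--Watson offspring mean $m(\delta,\mu) = d\cdot q(\delta,\mu)$, and since $d\ge 2$ we have $m(\delta,\mu)>1$ once $\delta+\mu$ is small enough. A supercritical Galton--Watson process survives with positive probability, and on the event that the embedded process survives we have $^{1}\xi_t\ne\varnothing$ for all $t$, giving $P(^{1}\xi_t\ne\varnothing\ \forall t>0)>0$.

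The main obstacle, and the step requiring the most care, is making the offspring of distinct type-1 particles genuinely independent so that a true Galton--Watson comparison is available, rather than merely a heuristic one. The tree structure is what saves us: the subtrees of $\TD$ rooted at the $d$ children of $x$ are disjoint, and because the contact-process graphical representation is built from independent Poisson clocks attached to vertices and oriented edges, the evolution of the type-1 cloud inside one child-subtree, \emph{conditioned on which vertex in that subtree first gets infected and when}, is independent of the other subtrees and of the past. I would therefore set up the graphical representation explicitly, define the embedded process by exploring it subtree by subtree (a standard ``exploration of the infected cluster'' argument), and verify that each exploration step introduces a fresh, independent copy of the offspring law. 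The interference from type-$2$ particles is the only thing breaking exact independence, and it is dealt with by the from-below domination above: we only ever \emph{under}count infections, so the dominated process is a bona fide Galton--Watson process and its survival implies survival of the type-1 cloud. A clean way to organize the time-discretization is to fix a small $h>0$ and only count a child as infected if the infection and the child's subsequent survival happen within the parent's lifetime; letting $h$ be absorbed into the constants, one gets the stated threshold behaviour in $\delta+\mu$.
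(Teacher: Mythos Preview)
Your overall strategy---embed a Galton--Watson tree by following, for each active type-$1$ particle, the children it infects before dying or mutating---is exactly the paper's, and your independence argument via disjoint subtrees of $\TD$ is the right one.

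The gap is in your treatment of type-$2$ blockers. The claim that $q(\delta,\mu)\to 1$ as $\delta+\mu\to 0$ is unjustified and in fact fails: a type-$2$ occupant of the child site $y$ dies at rate $\delta$, so as $\delta\to 0$ (which is permitted when $\delta+\mu\to 0$) the blocker becomes arbitrarily long-lived; if for instance $\mu/\delta\to\infty$ then the probability that the blocker clears before the type-$1$ parent dies or mutates is $\delta/(2\delta+\mu)\to 0$, and once the $2$ at $y$ has seeded its own supercritical subtree the vacancy fraction at $y$ shrinks with $\delta$ as well. More structurally, with a general finite initial configuration the offspring law is not homogeneous across your embedded tree (some subtrees carry initial $2$s, others do not), so you do not obtain a bona fide i.i.d.\ Galton--Watson process without further argument.

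The paper sidesteps all of this with a one-line reduction you omit: with positive probability the process first evolves into the configuration consisting of a single $1$ at some vertex $x_0$ and $0$ everywhere else, so it suffices to prove survival from that state. From there an easy induction on the level shows that when a vertex $z$ in the embedded tree is first hit by a $1$, its entire subtree below is still empty; hence each of its $d-1$ children is infected before $z$ dies or mutates with probability exactly $1/(1+\delta+\mu)$, and blocking never arises. With this reduction the offspring mean is $(d-1)/(1+\delta+\mu)>1$ for small $\delta+\mu$, and your closing remarks about the discretization parameter $h$ become unnecessary.
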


\begin{figure}
\includegraphics{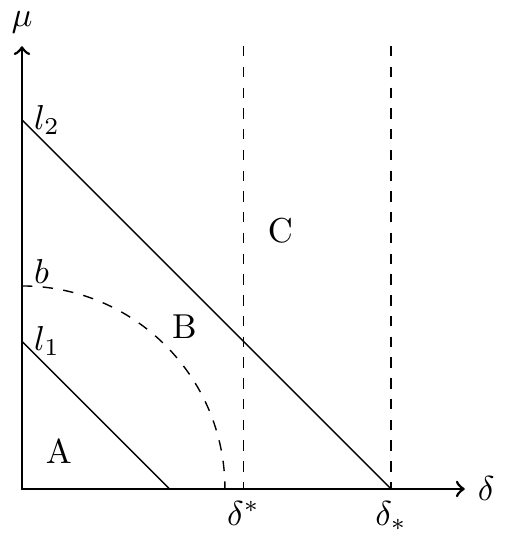}

\caption{Phase diagram of the the mutating contact process on $\TD$. We show
that in the area A (below line $l_{1}$) there is weak survival of
a single mutant , and in Area C (above line $l_{2}$) there is extinction
of a single mutant. In between the two lines (area B) a single phase
transition must occur, depicted as the dashed curve $b$. The entire
process survives strongly if $\delta<\delta_{*}$, dies out if $\delta<\delta^{*}$
and survives weakly in between. Note that $l_{2}$ indeed itersects
the $\delta$ axis at $\delta_{*}$ as shown, while the relation of
$\delta^{*}$ to the weak survival phase is unknown.}
\end{figure}
The final results which is needed for drawing a qualitative phase
diagram of the mutating contact on $S=\TD$ is that if $\delta+\mu$
is sufficiently large then even weak survival of 1s is impossible.
This is the next proposition, the proof of which follows from a coupling
with a dominating one type contact process which also does not survive
weakly. 
\begin{prop}
\label{prop:dominated death}For the mutating contact process with
on $\TD$ with any initial finite configuration of 1s, a single mutant
dies out, 
\[
P\left(^{1}{{\xi}_{t}}\ne\varnothing,\,\forall t>0\right)=0,
\]
if $\delta+\mu>{{\delta}_{*}}$ where ${{\delta}_{*}}$ is the upper
critical value.
\end{prop}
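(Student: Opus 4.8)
The plan is to control the set ${}^{1}\xi_{t}$ of sites currently of type $1$ by dominating it with a one-type contact process of death rate $\delta+\mu$, built on the same graphical representation, and then to invoke that a contact process whose death rate exceeds the upper critical value does not survive weakly. Since ${}^{1}\xi_{0}$ is finite, so is the dominating process's initial configuration, and the dominating process then dies out almost surely.

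Concretely, I would first realize the mutating contact process from a graphical representation on $\TD\times[0,\infty)$: a rate-$1$ Poisson process of directed infection arrows along each ordered edge $(x,y)$, and at each site $x$ two independent Poisson processes, of rate $\delta$ (death marks) and rate $\mu$ (mutation marks), with the defining transition rules read off in the usual way. On exactly the same marks I would run the one-type contact process $\zeta_{t}$ with infection rate $1$ per edge and death rate $\delta+\mu$, declaring that an arrow $x\to y$ adds $y$ to $\zeta$ whenever $x\in\zeta$, and that a death mark or a mutation mark at $x$ removes $x$ from $\zeta$; initialize $\zeta_{0}={}^{1}\xi_{0}$.

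The main step is the pathwise inclusion ${}^{1}\xi_{t}\subseteq\zeta_{t}$ for all $t\ge 0$, proved by induction over the almost surely locally finite sequence of marks. Granting ${}^{1}\xi_{s-}\subseteq\zeta_{s-}$ and examining the mark at time $s$: a death mark at $x$ deletes $x$ from ${}^{1}\xi$ (if it was of type $1$) and from $\zeta$; a mutation mark at $x$ turns a type-$1$ particle there into type $2$, removing $x$ from ${}^{1}\xi$, and also removes $x$ from $\zeta$; an arrow $x\to y$ enlarges ${}^{1}\xi$ only if $x$ is of type $1$ and $y$ is vacant at time $s-$, in which case $x\in\zeta_{s-}$ by the inductive hypothesis and the same arrow also puts $y$ into $\zeta$. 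In every case the inclusion persists; the moral is that the two features distinguishing the mutating dynamics from an ordinary contact process — type $1$ cannot reproduce onto an occupied (type-$2$) site, and it is lost to mutation as well as to death — can only make ${}^{1}\xi_{t}$ smaller than $\zeta_{t}$.

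Given the inclusion, $\{{}^{1}\xi_{t}\ne\varnothing\ \forall t\}\subseteq\{\zeta_{t}\ne\varnothing\ \forall t\}$. After the deterministic time change $t\mapsto(\delta+\mu)t$, $\zeta_{t}$ is the standard one-type contact process on $\TD$ with infection parameter $1/(\delta+\mu)$ per edge and death rate $1$; since $\delta+\mu$ exceeds the upper critical value $\delta_{*}$, the parameter $1/(\delta+\mu)$ lies strictly below the weak-survival threshold, so a copy started from the finite set ${}^{1}\xi_{0}$ dies out almost surely (see \cite{liggett2013stochastic}). Hence $P(\zeta_{t}\ne\varnothing\ \forall t)=0$, and therefore $P({}^{1}\xi_{t}\ne\varnothing\ \forall t)=0$. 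There is no serious obstacle: the only points requiring care are setting up the coupling so that the positions of the type-$2$ particles (which block type-$1$ reproduction) are carried along and the inclusion is genuinely never broken, and citing the correct input — extinction from a finite configuration — for the subcritical one-type contact process on the tree.
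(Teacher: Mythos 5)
Your proposal is correct and follows essentially the same route as the paper: both couple $^{1}\xi_{t}$ to a one-type contact process with death rate $\delta+\mu$ on the same graphical representation by treating mutation marks as deaths, establish the pathwise inclusion $^{1}\xi_{t}\subseteq\zeta_{t}$, and conclude from the almost sure extinction of the dominating process when $\delta+\mu$ exceeds the upper critical value. Your write-up merely spells out the mark-by-mark induction and the time rescaling in more detail than the paper does.
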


We wish to comment that since the process is monotone (an instance
of the model with smaller values of $\delta$ or $\mu$ can be attained
by thining the corresponding Poisson processes, see the graphical
construction in the next section for the details), there can be only
a single phase transition. Finally, note that the support of ${{\xi}_{t}}$
on $\TD$ follows the evolution of the single type contact process
with death rate $\delta$ and birth rate 1, as mutation events are
muted through this projection. Therefore, while the survival of a
single strain depends on both $\delta$ and $\mu$, the survival of
the entire species depends only on the value of $\delta$ in relation
to the two critical values ${{\delta}^{*}}<{{\delta}_{*}}$, which
will be defined in the next section.

\section{Previous Results and Model Construction}

Before we continue, it will be useful to recall basic results concerning
the one-type contact process. Please note that in the mutating contact
process, the free parameter is the death rate $\delta$, while the
birth rate is set at 1. It is usually the opposite case in previous
results: the death rate is set to 1 while the birth rate is a free
parameter $\lambda$. Through a rescaling of time by a factor of $1/\delta$,
one may transform the first into the second with $\lambda=1/\delta$
while maintaining equality of the law of the one-type contact process.
Thus, most results transfer directly, mutatis mutandis. For convenience
we will restate the results directly translated through this transformation. 

We will denote the set of occupied vertices of a one type contact
process on $S$ with death rate $\delta$ and birth rate 1 by ${{\zeta}_{t}}$,
and if it is initialized at a single site $x$, then we will denote
it $\zeta_{t}^{x}$. With this notation we will define two critical
values, 
\begin{align*}
 & {{\delta}_{*}}=\sup\left\{ \delta:P\left(\zeta_{t}^{x}\ne\varnothing\,\,\forall t>0\right)>0\right\} ,\\
 & {{\delta}^{*}}=\sup\left\{ \delta:P\left(x\in\zeta_{t}^{x}\text{ i}\text{.o as }t\to\infty\right)>0\right\} .
\end{align*}
 These critical values partition the survival of the contact process
into the following phases:
\begin{enumerate}
\item If $\delta<{{\delta}^{*}}$ then $P\left(x\in\zeta_{t}^{x}\text{ i}\text{.o as }t\to\infty\right)>0$and
we say that the process survives strongly, 
\item If ${{\delta}^{*}}<\delta<{{\delta}_{*}}$ then $P\left(x\in\zeta_{t}^{x}\text{ i}\text{.o as }t\to\infty\right)=0$
but $P\left(\zeta_{t}^{x}\ne\varnothing\,\,\forall t>0\right)>0$
and we say that the process survives weakly. 
\item If ${{\delta}_{*}}<\delta$ then $P\left(\zeta_{t}^{x}\ne\varnothing\,\,\forall t>0\right)=0$
and we say that the process dies out.
\end{enumerate}
It has been proven that on $S=\ZD$, ${{\delta}^{*}}={{\delta}_{*}}<\infty$,
so that weak survival is an impossibility and that for $S=\TD$, $0<{{\delta}^{*}}<{{\delta}_{*}}<\infty$
and the phases are right continuous at the critical values, See chapter
I.4 in Ligget \cite{liggett2013stochastic}. 

The \emph{complete convergence theorem} states that no matter the
initial configuration ,$\ensuremath{{{\zeta}_{t}}}$ converges weakly
to a mixture of the point mass on the empty configuration, ${{\delta}_{\varnothing}}$,
and another measure called the upper invariant measure, denoted by
$\bar{\nu}$. In details, denoting by $\alpha$ the probability of
survival (which obviously depends on the initial configuration, that
is, $\alpha=P\left(\zeta_{t}^{x}\ne\varnothing\,\,\forall t>0\right)$,
then the complete convergence theorem states that, ${{\zeta}_{t}}\Rightarrow\left(1-\alpha\right){{\delta}_{\varnothing}}+\alpha\bar{\nu}$.
Confer chapter I.4 of Ligget \cite{liggett2013stochastic} for the
details. 

\subsection*{Model Construction.}

The graphical construction of the contact process is a method of realizing
the contact process through the use of independent Poisson processes,
introduced by Harris \cite{Harris1974}. The construction applied
to the mutating contact processes is as follows. For each vertex$x\in S$
define two Poisson processes independent of all other processes : 
\begin{enumerate}
\item The first of these will have constant rate $\delta$ and will be appropriately
named the \emph{death} process at $x$. Denote its arrival times by
$\left\{ \delta_{n}^{x}:n\ge1\right\} $.
\item The second will have constant rate $\mu$ and will be named the \emph{mutation}
process at $x$. Denote its arrival times by $\left\{ \mu_{n}^{x}:n\ge1\right\} $.
\end{enumerate}
Further, for each ordered pair of neighboring vertices, $\left(x,y\right)\in S\times S$,
define independently another Poisson process, with rate 1, and name
it the \emph{infection} process from $x$ to$y$. Denote its arrival
times by $\left\{ {{\left(x\to y\right)}_{n}}:n\ge1\right\} $.

For some vertices $\left(x,y\right)\in S$ and times $s\le t$, we
say that there is an open 1-path from $\left(x,s\right)$ (read -
$x$ at time $s$) to $\left(y,t\right)$ if one can connect the two
space-time values by use of the timelines attached to any vertex and
outgoing arrows without going back in time and without crossing any
arrival of the death or mutation processes. 

More formally, there is open 1-path from $\left(x,s\right)$ to $\left(y,t\right)$
if there exists a sequence of space-time values $\left\{ \left({{x}_{i}},{{t}_{i}}\right)\right\} _{_{i=0}}^{n}$
such that: 
\begin{enumerate}
\item The sequence starts at $\left({{x}_{0}},{{t}_{0}}\right)=\left(x,s\right)$
and ends with $\left({{x}_{n}},{{t}_{n}}\right)=\left(y,t\right)$, 
\item The time values are increasing, ${{t}_{0}}<{{t}_{1}}<...<{{t}_{n}}$,
\item Consecutive vertices are neighbors, ${{x}_{i}}\sim{{x}_{i-1}},\,\,\,i\ge1$. 
\item For $0<i<n$, the time ${{t}_{i}}$ is an arrival of the infection
process from ${{x}_{i-1}}$ to ${{x}_{i}}$, ${{t}_{i}}={{\left({{x}_{i-1}}\to{{x}_{i}}\right)}_{k}}$
for some $k$. 
\item For $0\le i<n$, there is no arrival of the death process at ${{x}_{i}}$
during $\left({{t}_{i}},{{t}_{i+1}}\right)$. 
\item For $0\le i<n$, there is no arrival of the mutation process at ${{x}_{i}}$
during $\left({{t}_{i}},{{t}_{i+1}}\right)$. 
\end{enumerate}
Similarly we will say that there is an open 2-path from $\left(x,s\right)$
to $\left(y,t\right)$ if an appropriate sequence obeying 1-5 of the
above exists. By this definition, every 1-path is immediately also
a 2-path. 

Given an initial configuration , one may evolve it according to the
above graphical construction by propagating initial 1s through open
1-paths and initial 2s through open 2-paths. One must take care to
change 1s into 2s whenever they encounter a mutation arrival at their
site, and continue to propagate the resulting 2s through open 2-paths. 

The resulting configuration at time $t$ obeys the law of the mutating
contact process and therefore may be rightly denoted ${{\xi}_{t}}$.
We will omit the proof of this assertion, and defer to Harris \cite{Harris1974}.

\subsection*{Known Results}

Two results from \cite{Andjel2010} are used in the proof of Theorem
\ref{thm: no strong survival on Z}. We bring them here for the convenience
of the reader, while rephrasing them with our notation. The first
is Corollary 2.4.
\begin{cor*}
\textbf{\label{(=000023Andjel=000023) speed}} Suppose $\delta<{{\delta}_{*}}$
and let $A$ be an infinite subset of the positive integers, $\mathbb{Z}_{+}$,
then there exists a constant $v>0$, such that for any ${v}'<v$ there
exists some $x\in A$ with an open 2-path which is to the right of
the line $\left\{ \left({v}'t,t\right):t\ge0\right\} $.
\end{cor*}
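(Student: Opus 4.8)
The plan is to deduce the corollary from the classical linear growth of the supercritical one-type contact process; the hypothesis that $A$ is infinite is used only to produce a starting point whose forward path never dies.

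First, note that an open $2$-path in the graphical construction is precisely an open path of the one-type contact process $\zeta_{t}$ with death rate $\delta$ and birth rate $1$: the mutation arrivals are transparent to $2$-paths, and the union of occupied sites evolves exactly as $\zeta_{t}$. Since $\delta<\delta_{*}$, this contact process is supercritical. For $x\in\mathbb{Z}$ set $r_{t}^{x}=\sup\{y:\text{there is an open }2\text{-path from }(x,0)\text{ to }(y,t)\}$, with $\sup\varnothing=-\infty$, so that on the survival event $\{\zeta_{t}^{x}\ne\varnothing\ \forall t>0\}$ the points $(r_{t}^{x},t)$ trace an open $2$-path issued from $(x,0)$. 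The classical edge-speed theorem for the supercritical contact process (linear growth; see Chapter I.4 of \cite{liggett2013stochastic}) provides a deterministic constant $\alpha=\alpha(\delta)\in(0,\infty)$ with $r_{t}^{x}/t\to\alpha$ almost surely on survival; we take $v=\alpha$.

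Fix now $v'<v$. By translation invariance the law of the process $\rho_{t}^{x}:=r_{t}^{x}-x$ does not depend on $x$, and $\rho_{0}^{x}=0$; on survival $\rho_{t}^{x}/t\to\alpha>v'$, hence $v't-\rho_{t}^{x}\to-\infty$ and therefore $M^{x}:=\sup_{t\ge0}(v't-\rho_{t}^{x})$ is almost surely finite (indeed in $[0,\infty)$) on the survival event. With the convention $M^{x}=+\infty$ off survival, put $G_{x}=\{M^{x}<x\}=\{\zeta_{t}^{x}\ne\varnothing\ \forall t>0\}\cap\{r_{t}^{x}\ge v't\ \forall t\ge0\}$. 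Since the marginal law of $M^{x}$ is independent of $x$, we get $P(G_{x})\to P(\zeta_{t}^{x}\ne\varnothing\ \forall t>0)>0$ as $x\to\infty$ along $A$. On $G_{x}$, the right-edge path issued from $(x,0)$ lies entirely in the half-plane $\{y\ge v't\}$, i.e.\ to the right of the prescribed line, which is exactly the conclusion for that $x$.

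It remains to upgrade ``$P(G_{x})$ is bounded below for arbitrarily large $x\in A$'' to ``$G_{x}$ occurs for some $x\in A$ almost surely'', and this is the one delicate point, since $G_{x}$ is not a finite-time event. The way I would handle it is a standard block/renormalization argument: lower-bound $G_{x}$ by the intersection of a finite-time ``establishment'' event, depending only on the graphical construction inside a space-time box $B_{x}$, with a ``good continuation'' event yielding eternal survival together with a right edge advancing at rate at least $v'$; the existence of such a decomposition is exactly the Bezuidenhout--Grimmett comparison of the supercritical contact process with supercritical oriented percolation, which also underlies the linear-growth theorem invoked above. Choosing $x_{1}<x_{2}<\cdots$ in $A$ spread out enough that the boxes $B_{x_{k}}$ are pairwise disjoint makes the establishment events independent and each of positive probability, so infinitely many of them occur; following each occurrence with the continuation step, and thinning to a sub-sequence to secure the needed independence, one obtains that $G_{x_{k}}$ occurs for infinitely many $k$, hence for some $x\in A$. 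I expect this independence bookkeeping around non-local events to be the main obstacle; the probabilistic content of the corollary is entirely the strict positivity of the edge speed $\alpha$ for $\delta<\delta_{*}$, which is why in \cite{Andjel2010} it appears as a corollary of the preceding linear-growth material.
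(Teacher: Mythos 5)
This statement is imported by the paper verbatim as Corollary~2.4 of \cite{Andjel2010}; the paper gives no proof of it, so there is no internal argument to compare against, and your proposal has to stand on its own. Your overall strategy --- strict positivity of the edge speed for $\delta<\delta_{*}$, plus a block construction to upgrade ``positive probability for each $x$'' to ``almost surely for some $x\in A$'' --- is essentially the route of \cite{Andjel2010}. However, there is a genuine gap in the first half. The trajectory $t\mapsto(r_{t}^{x},t)$ of the right edge is \emph{not} an open path of the graphical construction: when the rightmost particle dies, the right edge jumps to the left onto a different particle with no connecting infection arrow, so the points $(r_{t}^{x},t)$ do not ``trace an open $2$-path.'' More seriously, the event $G_{x}=\{\zeta_{t}^{x}\ne\varnothing\ \forall t\}\cap\{r_{t}^{x}\ge v't\ \forall t\}$ does not imply the conclusion: the sites realizing the right edge at large times may all descend through ancestors that at intermediate times lay to the \emph{left} of the line, in which case the right edge stays right of $\{(v't,t)\}$ while no single infinite open path does. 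Producing a genuine open path confined to the right of a line of positive slope is exactly what requires the Durrett/Bezuidenhout--Grimmett comparison with supercritical oriented percolation (open paths in the renormalized lattice yield open contact-process paths confined to a cone of positive slope); the edge-speed theorem by itself does not deliver it, so the real content of your proof sits entirely in your third paragraph, which is only a sketch.

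The second gap is the one you flag yourself: the passage from $\inf_{x}P(E_{x})>0$ to $P(\bigcup_{x\in A}E_{x})=1$. The events $E_{x}$ are neither local nor independent, so Borel--Cantelli does not apply directly, and the ``continuation'' events for distinct $x_{k}$ overlap in space-time no matter how the starting points are spread out; the decomposition into a finite-box establishment event followed by a survival-with-speed event, together with the conditioning needed to iterate over failures, is precisely where the work lies and is the substance of the proof in \cite{Andjel2010}. As written, your argument correctly identifies all the ingredients and the role of the infiniteness of $A$, but it establishes neither the existence of the required open path for a single $x$ nor the almost-sure occurrence over $A$; both steps need the oriented-percolation renormalization carried out in detail rather than invoked.
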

Note that $v$ is the speed of propagation of the one-type contact
process, and is dependent on (and negatively correlated to) the death
rate. The second result required is Theorem 3.9.
\begin{thm*}
\textbf{\label{thm:(=000023Andjel=000023) 1s surrouned by 2s}} Consider
the two-type contact process ${{\eta}_{t}}$ on $\mathbb{Z}$ with
initial configuration ${{\eta}_{0}}$ such that there is a finite
number of 1s bounded by an infinite number of 2s on both sides. Then
\[
P\left(^{1}{{\xi}_{t}}\ne\varnothing,\,\forall t>0\right)=0.
\]
\end{thm*}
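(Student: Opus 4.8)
The plan is to dominate the type-1 population by a one-type contact process whose death rate is the combined rate $\delta+\mu$, and then to invoke the standard fact that such a contact process, being above the upper critical value, dies out almost surely when started from a finite configuration.

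First I would fix the graphical construction of Section 2 and, at each vertex $x$, superpose the death process (rate $\delta$) and the mutation process (rate $\mu$) into a single Poisson process of rate $\delta+\mu$; call its arrivals \emph{kill} arrivals. On this same probability space, run a one-type contact process $\zeta_{t}$ with the kill arrivals in the role of deaths and the original infection processes in the role of births, started from $\zeta_{0}={}^{1}\xi_{0}$. By Harris's construction, $\zeta_{t}$ is exactly the set of vertices $y$ joined to ${}^{1}\xi_{0}\times\{0\}$ by an infection path that crosses no kill arrival, i.e.\ no death arrival and no mutation arrival; hence $\zeta_{t}$ has the law of the one-type contact process on $\TD$ with birth rate $1$ and death rate $\delta+\mu$, started from the finite set ${}^{1}\xi_{0}$.

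The next step is the inclusion ${}^{1}\xi_{t}\subseteq\zeta_{t}$ for every $t$. Suppose $\xi_{t}(y)=1$. Tracing the genealogy of $y$ backwards through the graphical construction, the occupied time-interval at $y$ containing $t$ begins either at time $0$ or at an infection arrival from a neighbour; in the latter case that neighbour must have carried type $1$ just before the arrival (otherwise $y$ would have received type $2$), and that interval can contain no mutation arrival (otherwise we would have $\xi_{t}(y)=2$). Iterating, $y$ is joined to some vertex of ${}^{1}\xi_{0}$ at time $0$ by an open $1$-path, and an open $1$-path by definition crosses no death and no mutation arrival, so it is one of the paths defining $\zeta_{t}$; therefore $y\in\zeta_{t}$. (If the initial configuration contains $2$'s as well, the inclusion may be strict, which is harmless.) I expect this backward-genealogy argument to be the only delicate point of the proof.

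Finally, since $\delta+\mu>{{\delta}_{*}}$, the definition of the upper critical value gives $P\!\left(\zeta_{t}^{x}\ne\varnothing\ \forall t>0\right)=0$ for every single-site start, and because ${}^{1}\xi_{0}$ is finite and $\zeta_{t}^{A}=\bigcup_{x\in A}\zeta_{t}^{x}$ under the coupling, a union bound over the finitely many sites of ${}^{1}\xi_{0}$ shows that $\zeta_{t}=\varnothing$ for all sufficiently large $t$, almost surely. Combined with ${}^{1}\xi_{t}\subseteq\zeta_{t}$ this forces ${}^{1}\xi_{t}=\varnothing$ eventually, so $P\!\left({}^{1}\xi_{t}\ne\varnothing\ \forall t>0\right)=0$, which is the assertion. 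Every ingredient beyond the genealogy inclusion is a direct appeal to the one-type contact process facts recalled in Section 2.
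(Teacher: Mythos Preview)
Your argument does not address the stated theorem at all. The theorem concerns the \emph{two-type} contact process $\eta_{t}$ on $\mathbb{Z}$: there is no mutation mechanism, hence no parameter $\mu$, and no hypothesis of the form $\delta+\mu>\delta_{*}$ is available. The content of the result is precisely that the 1's die out even when the underlying one-type process is supercritical, so any attempt to conclude by comparison with a subcritical one-type process is doomed: in the interesting regime $\delta<\delta_{*}$ the dominating process you construct survives with positive probability and the inclusion ${}^{1}\eta_{t}\subseteq\zeta_{t}$ gives nothing. You also place the argument on $\TD$, whereas the statement lives on $\mathbb{Z}$ and crucially exploits the one-dimensional geometry (the 1's are trapped between two infinite blocks of 2's).

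What you have actually written is, almost verbatim, the paper's proof of Proposition~\ref{prop:dominated death}: couple ${}^{1}\xi_{t}$ to a one-type contact process with death rate $\delta+\mu$ and use $\delta+\mu>\delta_{*}$ to kill it. That proposition is about the mutating contact process on $\TD$ under an explicit parameter restriction, and your proof of it is correct. But the theorem you were asked about is Theorem~3.9 of Andjel, Miller and Pardoux, which the present paper merely \emph{cites} and does not prove; its proof requires genuinely different ideas (control of the interfaces between the 1-block and the surrounding 2's via the edge speed of the supercritical contact process) that your coupling does not supply.
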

The last result we wish to cite is Proposition 1 from \cite{Cox2009},
which will be used in the proof of Theorem \ref{thm:1 dies on TD}. 
\begin{prop*}
Assume$S$ is $\ZD$ or $\TD$ and $\bar{\nu}$ is the upper invariant
measure of the one-type contact process with death rate $\delta<{{\delta}_{*}}$.
Define 
\[
{{\delta}_{L}}=\underset{\left|A\right|>L}{\sup}\bar{\nu}\left(\zeta:\zeta\cap A=\varnothing\right).
\]
Then ${{\delta}_{L}}\to0$ as $L\to\infty$. 
\end{prop*}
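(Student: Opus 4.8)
The plan is to translate the statement into survival probabilities of the one-type contact process via self-duality, and then to prove that a contact process started from a large finite set is very unlikely to die out, uniformly in the choice of set. First I would record the dual identity. The graphical construction is symmetric under reversal of time and of the infection arrows, so the one-type contact process is self-dual; writing $\zeta^A_t$ for the process started from $A$ and recalling that the law of $\zeta^S_t$ (started from the full configuration) converges to $\bar\nu$, self-duality gives, for every finite $A\subseteq S$,
\[
\bar\nu(\zeta:\zeta\cap A=\varnothing)=1-\lim_{t\to\infty}P(\zeta^S_t\cap A\neq\varnothing)=1-\lim_{t\to\infty}P(\zeta^A_t\neq\varnothing)=P(\zeta^A\text{ dies out}),
\]
the last step because $\{\zeta^A_t\neq\varnothing\}$ decreases to the survival event for finite $A$. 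Setting $q(A):=P(\zeta^A\text{ dies out})$, the assertion $\delta_L\to0$ becomes $\sup_{|A|>L}q(A)\to0$; since enlarging $A$ (in particular to an infinite set) only lowers the emptiness probability, it suffices to consider finite $A$.

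Next I would use attractiveness to reduce to a well-separated set. In the graphical coupling $A\subseteq B$ forces $\zeta^A_t\subseteq\zeta^B_t$, so $q$ is nonincreasing under adding sites and $q(A)\le q(A')$ for every $A'\subseteq A$. Fix a scale $K$ and choose $A'\subseteq A$ maximal subject to its points being pairwise at distance $>2K$; maximality yields $|A|\le|A'|\,|B_{2K}|$, where $B_{2K}$ is a ball of radius $2K$, so that $|A'|\ge|A|/|B_{2K}|>L/|B_{2K}|\to\infty$. Hence it is enough to bound $q(A')$ for well-separated $A'$.

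The crux is to establish that the survival events seeded at the well-separated points are (asymptotically) independent, giving
\[
q(A')\le(1-\beta)^{|A'|}
\]
for some $\beta=\beta(\delta)>0$, whence $\sup_{|A|>L}q(A)\le(1-\beta)^{L/|B_{2K}|}\to0$ uniformly. On $\TD$ this is clean: assign to each $a\in A'$ a descendant subtree $\mathcal{T}_a$, chosen pairwise disjoint (possible since $A'$ is finite and separated and $\TD$ branches), and let $\beta$ be the probability that $a$ infects the root of $\mathcal{T}_a$ and the contact process confined to $\mathcal{T}_a$ thereafter survives; since $\mathcal{T}_a$ is a rooted regular tree with the same branching number as $\TD$, this confined process is still supercritical for $\delta<\delta_*$, so $\beta>0$. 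These confined survival events use disjoint families of Poisson processes and are therefore genuinely independent, and survival inside any $\mathcal{T}_a$ forces survival of $\zeta^{A'}$, so $\{\zeta^{A'}\text{ dies out}\}\subseteq\bigcap_a\{\text{confined process at }a\text{ dies out}\}$ and the geometric bound follows. On $\ZD$ there are no disjoint infinite cones, so I would instead invoke the Bezuidenhout--Grimmett block construction, valid throughout the survival phase $\delta<\delta_*$: it dominates the contact process from below by a supercritical oriented percolation and supplies a quantitative spatial decoupling, from which the same near-independence at well-separated seeds, and hence the same bound, follows.

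I expect this third step to be the main obstacle. The naive route, bounding $q(A')=P(\bigcap_a\{\zeta^{\{a\}}\text{ dies out}\})$ by the product $(1-\alpha)^{|A'|}$ with $\alpha>0$ the single-site survival probability, fails: the extinction events are decreasing functions of the graphical Poisson processes, so the FKG inequality yields only the reverse inequality $q(A')\ge(1-\alpha)^{|A'|}$ --- positive correlations point the wrong way. Real spatial separation of the underlying randomness is needed, supplied by disjoint subtrees on $\TD$ and by the block-construction decoupling on $\ZD$. The uniformity in $A$ required by the supremum is then automatic, since the final bound depends on $A$ only through $|A'|\to\infty$.
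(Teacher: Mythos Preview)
The paper does not prove this proposition: it is quoted as Proposition~1 of Cox--Schinazi \cite{Cox2009} and invoked as a black box in the proof of Theorem~\ref{thm:1 dies on TD}, so there is no in-paper argument to compare your proposal against.

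On its own merits your outline is the standard one and is essentially how the cited reference proceeds. The duality step $\bar\nu(\zeta:\zeta\cap A=\varnothing)=P(\zeta^{A}\text{ dies out})$ is correct, as is your diagnosis that FKG points the wrong way so that genuine spatial decoupling is required. On $\TD$ the disjoint-subtree idea works, but two points deserve tightening: (i) the existence of pairwise disjoint infinite half-trees, one attached to each $a\in A'$, is true for any finite $A'$ in a tree of degree at least $3$, but it is not quite as automatic as ``$A'$ is finite and separated'' suggests (a vertex of $A'$ can have other points of $A'$ in every one of its $d+1$ directions, so one must route through the Steiner hull of $A'$ to find free directions); and (ii) you are using that the contact process restricted to a rooted half-tree has the same weak-survival threshold as on the full $\TD$, which is true but merits a citation rather than an assertion. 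On $\ZD$ your appeal to the Bezuidenhout--Grimmett construction is the right move: once one has it, each well-separated seed couples to an independent supercritical oriented percolation in its own space--time slab, and the geometric bound $q(A')\le(1-\beta)^{|A'|}$ follows; your sketch is accurate, if terse.
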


\section{Proof of Results}

In this section we will prove Theroems \ref{thm: no strong survival on Z}
and \ref{thm:1 dies on TD}, and propositions \ref{prop:GW survival}
and \ref{thm: no strong survival on Z}. 
\begin{proof}[Proof of Theorem \ref{thm: no strong survival on Z}]
We will consider the case where $\delta<{{\delta}_{*}}$ such that
the one-type contact process has a positive probability of survival.
As well, it suffices to consider all probabilities on the event of
species survival. The certainty of the extinction of a single mutant
is trivial in the other cases. 

Denote the leftmost site occupied by a 1 at time $t$ by ${{L}_{t}}=\min\left\{ x:{{\xi}_{t}}\left(x\right)=1\right\} $
and the rightmost site by ${{R}_{t}}=\max\left\{ x:{{\xi}_{t}}\left(x\right)=1\right\} $.
${{L}_{t}}$ and ${{R}_{t}}$ both undergo mutation at a constant
rate (without relation to the specific spatial location) so that if
${{s}_{0}}$ is an arrival time of such a mutation, then a site to
the right of ${{R}_{{{s}_{0}}}}$ , $\underset{s\to{{s}_{0}}-}{\lim}{{R}_{s}}$,
will be occupied by a 2 at time ${{s}_{0}}$. Since there are w.p.
1 infinitely many such arrivals, the set $^{2}{{\xi}_{t}}\cap\left({{R}_{t}},\infty\right)$
is non-empty infinitely many times almost surely.

Corollary 2.4 of \cite{Andjel2010} implies that there is a positive
probability, ${{p}_{0}}$, such that an infinite open path starting
from $\left(0,0\right)$ lying on the right of the line $\left\{ \left(kt,t\right):t\ge0\right\} $
for some $k>0$ exists. From the time and space homogeneity of the
graphical construction it follows that this statement applies to any
starting point $\left(x,s\right)$ with regards to the line $\left\{ \left(x+kt,s+t\right):t\ge0\right\} $.
Thus, combined with the previous results, almost surely there exists
such a path with ${{\xi}_{t}}\left(x\right)=2$. 

By the same argument regarding ${{L}_{t}}$, almost surely there exists
an infinite open path lying on the left of $\left\{ \left(x-kt,s+t\right):t\ge0\right\} $
such that at its beginning ${{\xi}_{s}}\left(x\right)=2$. It follows
that w.p. 1 there exists a time ${{t}_{0}}$ such that for all $t\ge{{t}_{0}}$,
the set of all 1s,$^{1}{{\xi}_{t}}$, is contained in an interval
$\left(z\left(t\right),y\left(t\right)\right)$ where ${{\xi}_{t}}\left(z\left(t\right)\right)={{\xi}_{t}}\left(y\left(t\right)\right)=2$.

Now, we argue the evolution of ${{\xi}_{{{t}_{0}}}}$ inside the interval$\left(z\left(t\right),y\left(t\right)\right)$
is independent the configuration outside of the interval: for the
purpose of determining the value of ${{\xi}_{t}}$ on $\left(z\left(t\right),y\left(t\right)\right)$,
any open path from outside the interval into it, may as well be truncated
to the time-space point where it crosses $z\left(t\right)\text{ or }y\left(t\right)$,
since these endpoints are already occupied by a 2 which will propogate
along the rest of the path. Thus we may assume 
\[
{{\xi}_{{{t}_{0}}}}\left(\left(-\infty,z\left({{t}_{0}}\right)\right]\cup\left[y\left({{t}_{0}}\right),\infty\right)\right)=2
\]
 without affecting the evolution of 1s in the model. 

At this point we will couple the mutating contact process with an
instance of the standard two-type contact process ${{\eta}_{t}}$
with initial configuration ${{\eta}_{{{t}_{0}}}}={{\xi}_{{{t}_{0}}}}$
such that the following monotonicity property holds, 
\[
^{1}{{\xi}_{t}}\subseteq\left\{ x\in\mathbb{Z}:{{\eta}_{t}}\left(x\right)=1\right\} \text{ for all }t\ge{{t}_{0}}.
\]
This coupling is attained by applying the same graphical construction
to the processes from the time ${{t}_{0}}$, while ignoring the mutation
arrivals when evolving the two-type process. It is evident that for
such a coupling, starting from the same initial configuration, the
monotonicity holds: if ${{\xi}_{t}}\left(x\right)=1$ then an open
path with no mutation arrivals from an initial 1 to $\left(x,t\right)$
must exist. The same open path implies ${{\eta}_{t}}\left(x\right)=1$.
An application of Theorem 3.9 of \cite{Andjel2010} on this instance
of the two-type process with its initial conditions yields 
\[
P\left(\left\{ x:{{\eta}_{t}}\left(x\right)=1\right\} \ne\varnothing,\,\forall t>0\right)=0
\]
 which by the monotonicity property also gives 
\[
P\left(^{1}{{\xi}_{t}}\ne\varnothing,\,\forall t>0\right)=0,
\]
as required. 
\end{proof}
~
\begin{proof}[Proof of Theorem \ref{thm:1 dies on TD}.]
We note that $^{1}{{\xi}_{t}}$ can be coupled to a one type contact
process with birth rate 1 and death rate $\delta+\mu$ by using the
same graphical construction as ours and treating mutation arrivals
as death arrivals. We will denote this coupled process by by $^{1}{{\zeta}_{t}}$.
In this coupling it is true $^{1}{{\xi}_{t}}\subseteq{}^{1}{{\zeta}_{t}}$
for the same initial configuration of 1s, as both propagate through
the same open 1-paths with the added restriction for the first that
some vertices along those open paths may already be occupied by 2s.
Thus, if $\delta+\mu>{{\delta}^{*}}$ then $P\left({{\xi}_{t}}\left(x\right)=1\right)\to0$
follows immediately from the lack of strong survival of the coupled
one-type process.

If it is true that $\delta+\mu<{{\delta}^{*}}$, the proof becomes
more complicated and is inspired by the proof of Theorem 2 in \cite{Cox2009}.
Before we begin the proof in this case, we may as well discuss the
matter conditioned on the event of weak survival of type 1, as well
as assuming that weak survival of type 1 may occur with positive probability.
The result is obvious in the other cases. In other words, throughout
this proof we consider all probabilities to be conditional on the
event $\left\{ ^{1}{{\xi}_{t}}\ne\varnothing,\,\forall t>0\right\} $.
On this event, it also follows that $\left|^{2}{{\xi}_{s}}\right|\ne\varnothing$
infinitely often, as type 1 mutates into type 2 at a constant rate.
Since individuals die at a constant rate, the descendants of any single
individual must either die out or tend to infinity as $t\text{ }\to\text{ }\infty$.
This follows from the fact that with some constant positive probability,
$M$ individuals (or less) will die before propagating to other vertices
in a single time unit. If at an infinite and unbounded set of times
$t$ the number of descendants is bounded above by some fixed $M>0$,
then as $t\to\infty$, the (at most) $M$ individuals will die out
with probability tending to 1 as a result of geometric sampling of
the aforementioned event. Thus, under the assumption that $\left\{ ^{1}{{\xi}_{t}}\ne\varnothing,\,\forall t>0\right\} $
occurs, it follows that for fixed $M>0$
\begin{equation}
P\left(\left|^{1}{{\xi}_{t}}\right|\ge M\right)\to1\text{ as }\text{ }t\text{ }\to\text{ }\infty,
\end{equation}
which also implies that for fixed $L>0$,
\begin{equation}
P\left(\left|^{2}{{\xi}_{t}}\right|\ge L\right)\to1\text{ as }\text{ }t\text{ }\to\text{ }\infty\label{eq:2>L}
\end{equation}
since 1s mutate to 2s at a constant rate. 

Let $x\in S$ be some vertex. We wish to show that $P\left({{\xi}_{t}}\left(x\right)=1\right)\to0$
as $\text{ }t\text{ }\to\text{ }\infty$. Using the previous result,
we may restrict the events in question to $\left\{ {{\xi}_{t+u}}\left(x\right)=1,\left|^{2}{{\xi}_{u}}\right|\ge L\right\} ,$which
are subsets of $\left\{ {{\xi}_{t+u}}\left(x\right)=1\right\} $ and
show 
\[
\underset{u\to\infty}{\lim}\underset{t\to\infty}{\limsup}P\left({{\xi}_{t+u}}\left(x\right)=1,\left|^{2}{{\xi}_{u}}\right|\ge L\right)=0.
\]
To this end we need to express the event 
\[
\left\{ {{\xi}_{t+u}}\left(x\right)=1,\left|^{2}{{\xi}_{u}}\right|\ge L\right\} 
\]
 with the notation of the ancestor process, defined in a similar fashion
to \cite{Cox2009}. Let an ancestor configuration $\hat{\xi}$ be
a (possibly empty) sequence of pairs $\left(\left({{a}_{1}},{{b}_{1}}\right),...,\left({{a}_{n}},{{b}_{n}}\right)\right)$
for some $n\ge1$, where each ${{a}_{j}}\in S$ and ${{b}_{j}}\in\left\{ 1,2\right\} $.
The vertices ${{a}_{j}}$ will denote the possible ancestors in descending
order of precedence, and ${{b}_{j}}$ will indicate whether a certain
possible ancestor is viable only if it is occupied by a 2 (2-viable)
or if it is viable with no regard to its type (in which case we will
say it is 1-viable).

In more details, we define the ancestor process $\hat{\xi}_{s}^{\left(x,t\right)}$
to be the list of possible ancestors of $\left(x,t\right)$ at time
$t-s$ for some $s\in\left[0,t\right]$. $\hat{\xi}_{s}^{\left(x,t\right)}$
is a Markov process defined recursively as follows: At time $t$ the
only possible ancestor of $\left(x,t\right)$ is $x$ itself, so that
$\hat{\xi}_{0}^{\left(x,t\right)}=\left(\left(x,1\right)\right)$.
Now, under the assumption that $\hat{\xi}_{u}^{\left(x,t\right)}$
has been properly defined for all $u\le s$:
\begin{enumerate}
\item If $\hat{\xi}_{s}^{\left(x,t\right)}$ is empty, define $\hat{\xi}_{v}^{\left(x,t\right)}$
to be empty as well for all $v\in\left(s,t\right]$. 
\item Otherwise, if $\hat{\xi}_{s}^{\left(x,t\right)}=\left(\left({{a}_{1}},{{b}_{1}}\right),...,\left({{a}_{n}},{{b}_{n}}\right)\right)$,
then observing the graphical construction, look for the most recent
past event affecting any of the ${{a}_{j}}$ (an arrival of any of
the Poisson processes involved). If no such event occurs till time
0 then define $\hat{\xi}_{v}^{\left(x,t\right)}=\hat{\xi}_{s}^{\left(x,t\right)}$
for all $v\in\left(s,t\right]$. 
\item If Such an event occurs, denote its time of occurrence by $r$ and
define $\hat{\xi}_{v}^{\left(x,t\right)}=\hat{\xi}_{s}^{\left(x,t\right)}$
for all $v\in\left(s,r\right)$. $\hat{\xi}_{r}^{\left(x,t\right)}$
itself is defined in accordance with the specific event that occurred: 
\begin{enumerate}
\item If the event is an arrow pointing from some $a\in S$ to ${{a}_{j}}$
then insert $\left(a,b\right)$ into the sequence $\hat{\xi}_{s}^{\left(x,t\right)}$
immediately after each appearance of ${{a}_{j}}$ and set the value
of the type viability $b$ to be identical to that of the preceding
appearance of ${{a}_{j}}$ (i.e. if ${{b}_{j}}=x$ set $b=x$ only
for the following insertion). 
\item If the event is a mutation arrival at ${{a}_{j}}$, set ${{b}_{j}}=2$
for all appearances of ${{a}_{j}}$ in the sequence (if it isn\textquoteright t
set already).
\item If the event is a death arrival, delete all appearances of ${{a}_{j}}$
from the sequence. If all the ${{a}_{j}}$ have been deleted, set
$\hat{\xi}_{r}^{\left(x,t\right)}$ to be empty and proceed as in
1.
\end{enumerate}
\end{enumerate}
This algorithm defines $\hat{\xi}_{s}^{\left(x,t\right)}$ in such
a way that ${{\xi}_{t}}\left(x\right)$ takes the value of the first
vertex in $\hat{\xi}_{s}^{\left(x,t\right)}$ which is occupied at
time $t-s$ (conditional on its 1-viability, if the vertex is occupied
by a 1). The ancestor process $\hat{\xi}_{s}^{\left(x,t\right)}$
moves backwards in time and so can only be defined up to time $\text{s=}t$,
that is for bounded intervals. To overcome this difficulty, using
the time reversibility of the underlying Poisson processes, we can
switch to forward time as in \cite{Cox2009,Neuhauser1992}, and define
a variation of the process $\overline{\xi}_{s}^{\left(x,t\right)}$
for all $s\ge0$ such that the laws of $\overline{\xi}_{s}^{\left(x,t\right)}$
and $\hat{\xi}_{s}^{\left(x,t\right)}$ agree for all $s\le t$. Henceforward,
if the ancestral process is strated at $\left(x,0\right)$, we will
denote it $\overline{\xi}_{s}^{x}$.

We may now return to discuss the events $\left\{ {{\xi}_{t+u}}\left(x\right)=1,\left|^{2}{{\xi}_{u}}\right|\ge L\right\} $.
First assume that ${{\xi}_{u}}=\eta$ where $\eta$ is some non-random
configuration of 1s and 2s on $S$ with $\left|^{2}\eta\right|\ge L$.
We will later use the law of total probability to generalize the following
argument. Note that ${{\xi}_{t+u}}\left(x\right)=1$ if and only if
the following event, denoted $OC\left(\hat{\xi}_{t}^{\left(x,t+u\right)},\eta\right)$,
occurs: the first occupied vertex in the ancestor list $\hat{\xi}_{t}^{\left(x,t+u\right)}$
is 1-viable and is actually occupied by a 1 in the configuration $\eta$.
Since $\hat{\xi}_{t}^{\left(x,t+u\right)}$ has the same law as $\overline{\xi}_{t}^{x}$
it follows that 
\begin{equation}
P\left(OC\left(\hat{\xi}_{t}^{\left(x,t+u\right)},\eta\right)\right)=P\left(OC\left(\overline{\xi}_{t}^{x},\eta\right)\right).
\end{equation}
.

Define the list of first priority 2-viable ancestors, $A_{t}^{x}$
to be empty if the first ancestor in the list $\overline{\xi}_{t}^{x}$
is 1-viable, otherwise define it to be the set of all of the 2-viable
ancestors that appear on the list before the first 1-viable ancestor.

Now, from these definitions we can deduce that that $OC\left(\overline{\xi}_{t}^{x},\eta\right)$
does not co-occur with $A_{t}^{x}\cap{}^{2}\eta\ne\varnothing$ and
that $OC\left(\overline{\xi}_{t}^{x},\eta\right)\subseteq\left\{ \overline{\xi}_{t}^{x}\ne\varnothing\right\} $.
Thus,
\begin{equation}
P\left(OC\left(\overline{\xi}_{t}^{x},\eta\right)\right)\le P\left(\overline{\xi}_{t}^{x}\ne\varnothing,A_{t}^{x}\cap{}^{2}\eta=\varnothing\right).
\end{equation}
 .

Summing over all configurations $\eta$ and using the independence
of disjoint time-space regions we get 
\begin{align}
 & P\left({{\xi}_{t+u}}\left(x\right)=1,\left|^{2}{{\xi}_{u}}\right|\ge L\right)\label{eq:total prob}\\
= & \int\limits _{\left|^{2}\eta\right|\ge L}{P\left({{\xi}_{u}}\in d\eta\right)P\left(OC\left(\hat{\xi}_{t}^{\left(x,t+u\right)},\eta\right)\right)}\nonumber \\
= & \int\limits _{\left|^{2}\eta\right|\ge L}{P\left({{\xi}_{u}}\in d\eta\right)P\left(OC\left(\overline{\xi}_{t}^{x},\eta\right)\right)}\nonumber \\
\le & \int\limits _{\left|^{2}\eta\right|\ge L}{P\left({{\xi}_{u}}\in d\eta\right)P\left(\overline{\xi}_{t}^{x}\ne\varnothing,A_{t}^{x}\cap{}^{2}\eta=\varnothing\right)}.\nonumber 
\end{align}

In what follows it will be shown that for large $t$ and $u$, very
rarely do the events $\left\{ \overline{\xi}_{t}^{x}\ne\varnothing\right\} $
and $\left\{ A_{t}^{x}\cap{}^{2}\eta=\varnothing\right\} $ co-occur.
For this purpose, we define a combination of events that render a
location only 2-viable. Call a space-time point $\left(y,s\right)$
good if a mutation arrival occurs at $y$ during $\left(s,s+1\right)$
and no other event affecting $y$ occur during this time frame. Then
${{p}_{0}}=P\left(\left(y,s\right)\text{ is good}\right)$ is a positive
constant that does not depend on $\left(y,s\right)$. Considering
a site $x$ with non-empty ancestry at time $s$: $\overline{\xi}_{s}^{x}=\left(\left({{a}_{1}}\left(s\right),{{b}_{1}}\left(s\right)\right),...,\left({{a}_{n}}\left(s\right),{{b}_{n}}\left(s\right)\right)\right),$
if it occurs that $\left({{a}_{1}}\left(s\right),s\right)$ is good
then ${{a}_{1}}\left(s\right)$ remains the primary ancestor at least
until time $s+1$ because no deaths occur and it becomes 2-viable
because of the mutation: $\left({{a}_{1}}\left(s+1\right),{{b}_{1}}(s+1)\right)=\left({{a}_{1}}\left(s\right),2\right)$.
Therefore, for all $v\ge s+1$, The forward time ancestry list of
$x$, $\overline{\xi}_{v}^{x}$, begins with the list of ancestors
of ${{a}_{1}}\left(s+1\right)$: $\overline{\xi}_{v}^{\left({{a}_{1}}\left(s+1\right),s+1\right)}$,
and every ancestor from this list is 2-vialbe for $x$.

We will now turn to the geometric sampling argument mentioned at the
beginning of the proof. Fix some $T>0$ and define ${{s}_{k}}=k\left(T+1\right)$
, ${{t}_{k}}={{s}_{k}}+1$,$k\ge0$. Let 
\begin{equation}
R=\inf\left\{ k|\overline{\xi}_{{{s}_{k}}}^{x}\ne\varnothing,\left({{a}_{1}}\left({{s}_{k}}\right),{{s}_{k}}\right)\text{ is good and }\overline{\xi}_{{{s}_{k+1}}}^{\left({{a}_{1}}\left({{t}_{k}}\right),{{t}_{k}}\right)}\ne\varnothing\right\} .
\end{equation}
$R$ is the first time that $\left({{a}_{1}}\left({{s}_{k}}\right),{{s}_{k}}\right)$
is good and that ${{a}_{1}}\left({{s}_{k}}\right)$, which is also
${{a}_{1}}\left({{t}_{k}}\right)$, has an ancestry that lasts at
least $T$ time units further than ${{t}_{k}}$. To see that $R$
is dominated by a Geometric r.v., consider from the self-duality of
the contact process that $P\left(\overline{\xi}_{{{s}_{k+1}}}^{\left({{a}_{1}}\left({{t}_{k}}\right),{{t}_{k}}\right)}\ne\varnothing\right)\ge\alpha$,
where $\alpha$ is the probability of weak survival in the coupled
contact process, $^{2}{{\zeta}_{t}}$ with initial configuration ${{a}_{1}}\left({{t}_{k}}\right)=2$.
Also consider that the events $\left\{ \left({{a}_{1}}\left({{s}_{k}}\right),{{s}_{k}}\right)\text{ is good}\right\} $
and $\left\{ \overline{\xi}_{{{s}_{k+1}}}^{\left({{a}_{1}}\left({{t}_{k}}\right),{{t}_{k}}\right)}\ne\varnothing\right\} $
are determined by disjoint regions of space-time and are thus independent.
Deduce that 
\begin{align}
 & P\left(\left({{a}_{1}}\left({{s}_{k}}\right),{{s}_{k}}\right)\text{ is good and }\overline{\xi}_{{{s}_{k+1}}}^{\left({{a}_{1}}\left({{t}_{k}}\right),{{t}_{k}}\right)}\ne\varnothing\right)\\
= & P\left(\left({{a}_{1}}\left({{s}_{k}}\right),{{s}_{k}}\right)\text{ is good }\right)P\left(\overline{\xi}_{{{s}_{k+1}}}^{\left({{a}_{1}}\left({{t}_{k}}\right),{{t}_{k}}\right)}\ne\varnothing\right)\nonumber \\
\ge & {{p}_{0}}\alpha.\nonumber 
\end{align}
 Now, This inequality holds for any $k$, so that by iteration and
the Markov property, 
\begin{equation}
P\left(\text{ }\overline{\xi}_{{{s}_{k+1}}}^{x}\ne\varnothing\text{ and }R>k\right)\le{{\left(1-\alpha{{p}_{0}}\right)}^{k}},\label{eq:R1}
\end{equation}
 consequently, for any ${{k}_{0}}>0$ and $t>{{s}_{{{k}_{0}}+1}}$
we may partition the event 
\[
\left\{ \overline{\xi}_{t}^{x}\ne\varnothing,A_{t}^{x}\cap{}^{2}\eta=\varnothing\right\} 
\]
 by the value of $R$: 
\begin{equation}
P\left(\overline{\xi}_{t}^{x}\ne\varnothing,A_{t}^{x}\cap{}^{2}\eta=\varnothing\right)\le{{\left(1-\alpha{{p}_{0}}\right)}^{{{k}_{0}}}}+\sum\limits _{k=0}^{{{k}_{0}}}{P\left(R=k,\overline{\xi}_{t}^{x}\ne\varnothing,A_{t}^{x}\cap{}^{2}\eta=\varnothing\right)}.\label{eq:R2}
\end{equation}
 We will further partition the events under the sum by the following,
defined for all $a\in S$ , 
\begin{equation}
{{G}_{k}}\left(a\right)=\left\{ R>k-1,\,\,\overline{\xi}_{{{s}_{k}}}^{x}\ne\varnothing,\,\,{{a}_{1}}\left({{s}_{k}}\right)=a\text{ and }\left(a,{{s}_{k}}\right)\text{ is good}\right\} .\label{eq:R3}
\end{equation}
 ${{G}_{k}}\left(a\right)$ is the event that the first time that
$\left({{a}_{1}}\left({{s}_{m}}\right),{{s}_{m}}\right)$ is good
is attained at $m=k$ and it is attained at ${{a}_{1}}\left({{s}_{m}}\right)=a$.
Note that for all $k\le{{k}_{0}}$ and on the event ${{G}_{k}}\left(a\right)$,
the list of first priority 2-viable ancestors of $x$ at time $t$
,$A_{t}^{x}$, contains the vertices listed in $\overline{\xi}_{t}^{\left(a,{{t}_{k}}\right)}$
because $\left(a,{{s}_{k}}\right)$ is good, so that its ancestors
are first in the ancestry list of $x$, and they must be 2-viable
for $x$ because of the mutation arrival that must occur. Denoting
the actual vertices listed (instead of the pairs $\left(a,b\right)$)
by $\text{supp}\left(\overline{\xi}_{t}^{\left(a,{{t}_{k}}\right)}\right)$
, we incorporate this relation into the partition, and get the following
inequality, 
\begin{align}
 & P\left(R=k,\overline{\xi}_{t}^{x}\ne\varnothing,A_{t}^{x}\cap{}^{2}\eta=\varnothing\right)\label{eq:R4}\\
\le & \sum\limits _{a\in S}{P\left({{G}_{k}}\left(a\right)\cap\left\{ \overline{\xi}_{{{s}_{k+1}}}^{\left(a,{{t}_{k}}\right)}\ne\varnothing,\,\,\text{supp}\left(\overline{\xi}_{t}^{\left(a,{{t}_{k}}\right)}\right)\cap{}^{2}\eta=\varnothing\right\} \right)}\nonumber 
\end{align}
and apply the independence of disjoint space-time regions on each
summand: 
\[
\begin{array}{cc}
\\
=
\end{array}
\]
\begin{align}
 & P\left({{G}_{k}}\left(a\right)\cap\left\{ \overline{\xi}_{{{s}_{k+1}}}^{\left(a,{{t}_{k}}\right)}\ne\varnothing,\,\,\text{supp}\left(\overline{\xi}_{t}^{\left(a,{{t}_{k}}\right)}\right)\cap{}^{2}\eta=\varnothing\right\} \right)\label{eq:R5}\\
= & P\left({{G}_{k}}\left(a\right)\right)P\left(\overline{\xi}_{{{s}_{k+1}}}^{\left(a,{{t}_{k}}\right)}\ne\varnothing,\,\,\text{supp}\left(\overline{\xi}_{t}^{\left(a,{{t}_{k}}\right)}\right)\cap{}^{2}\eta=\varnothing\right).\nonumber 
\end{align}
 Now, since the set $\text{supp}\left(\overline{\xi}_{t}^{\left(x,s\right)}\right)$
is not affected by mutation arrivals in the graphical construction
(they only change the second term of the pairs in $\overline{\xi}_{t}^{\left(a,{{t}_{k}}\right)}$),
it follows the evolution of a time-reversal of the regular one-type
contact process, and therefore has the same law as $^{2}{{\zeta}_{t-s}}$
initialized at $^{2}{{\zeta}_{0}}=\left\{ x\right\} $ and null elsewhere.
From this we deduce the following, while noting that ${{s}_{k+1}}-{{t}_{k}}=T$
and that $t>{{s}_{{{k}_{0}}+1}}>{{t}_{k}}$ .
\begin{equation}
P\left(\overline{\xi}_{{{s}_{k+1}}}^{\left(a,{{t}_{k}}\right)}\ne\varnothing,\text{ supp}\left(\overline{\xi}_{t}^{\left(a,{{t}_{k}}\right)}\right)\cap{}^{2}\eta=\varnothing\right)=P\left(\left|^{2}{{\zeta}_{T}}\right|\ge1,{}^{2}{{\zeta}_{t-{{t}_{k}}}}\left(^{2}\eta\right)\equiv0\right)\label{eq:R6}
\end{equation}
 This last event may occur in two distinct ways: either the entire
contact process dies out between time $T$ and time $t-{{t}_{k}}$,
or the process lives till time $t-{{t}_{k}}$ but does not appear
on the vertices in $^{2}\eta$. Denote the probability of the first
by $\rho\left(T\right)$ and observe that by the complete convergence
theorem, $\rho\left(T\right)\to0$ as $T\to\infty$. We get the following
partition, 
\begin{equation}
P\left(\left|^{2}{{\zeta}_{T}}\right|\ge1,{}^{2}{{\zeta}_{t-{{t}_{k}}}}\left(^{2}\eta\right)\equiv0\right)=\rho\left(T\right)+P\left(\left|^{2}{{\zeta}_{t-{{t}_{k}}}}\right|\ge1,{}^{2}{{\zeta}_{t-{{t}_{k}}}}\left(^{2}\eta\right)\equiv0\right).\label{eq:R7}
\end{equation}

Combining \ref{eq:R2} to \ref{eq:R7}, we have,

\begin{align}
 & P\left(\overline{\xi}_{t}^{x}\ne\varnothing,A_{t}^{x}\cap{}^{2}\eta=\varnothing\right)\label{eq:BIGINEQ}\\
\le & {{\left(1-\alpha{{p}_{0}}\right)}^{{{k}_{0}}}}+\sum\limits _{k=0}^{{{k}_{0}}}{P\left(R=k,\overline{\xi}_{t}^{x}\ne\varnothing,A_{t}^{x}\cap{}^{2}\eta=\varnothing\right)}\nonumber \\
\le & {{\left(1-\alpha{{p}_{0}}\right)}^{{{k}_{0}}}}+\sum\limits _{k=0}^{{{k}_{0}}}{\sum\limits _{a\in S}^ {}{P\left({{G}_{k}}\left(a\right)\cap\left\{ \overline{\xi}_{{{s}_{k+1}}}^{\left(a,{{t}_{k}}\right)}\ne\varnothing,\text{ supp}\left(\overline{\xi}_{t}^{\left(a,{{t}_{k}}\right)}\right)\cap{}^{2}\eta=\varnothing\right\} \right)}}\nonumber \\
= & {{\left(1-\alpha{{p}_{0}}\right)}^{{{k}_{0}}}}+\sum\limits _{k=0}^{{{k}_{0}}}{\sum\limits _{a\in S}^ {}{P\left({{G}_{k}}\left(a\right)\right)P\left(\overline{\xi}_{{{s}_{k+1}}}^{\left(a,{{t}_{k}}\right)}\ne\varnothing,\text{ supp}\left(\overline{\xi}_{t}^{\left(a,{{t}_{k}}\right)}\right)\cap{}^{2}\eta=\varnothing\right)}}\nonumber \\
= & {{\left(1-\alpha{{p}_{0}}\right)}^{{{k}_{0}}}}+\sum\limits _{k=0}^{{{k}_{0}}}{\sum\limits _{a\in S}^ {}{P\left({{G}_{k}}\left(a\right)\right)P\left(\left|^{2}{{\zeta}_{T}}\right|\ge1,{}^{2}{{\zeta}_{t-{{t}_{k}}}}\left(^{2}\eta\right)\equiv0\right)}}\nonumber \\
\le & {{\left(1-\alpha{{p}_{0}}\right)}^{{{k}_{0}}}}+\sum\limits _{k=0}^{{{k}_{0}}}{\sum\limits _{a\in S}^ {}{P\left({{G}_{k}}\left(a\right)\right)\left(\rho\left(T\right)+P\left(\left|^{2}{{\zeta}_{T}}\right|\ge1,{}^{2}{{\zeta}_{t-{{t}_{k}}}}\left(^{2}\eta\right)\equiv0\right)\right)}}\nonumber \\
\le & {{\left(1-\alpha{{p}_{0}}\right)}^{{{k}_{0}}}}+\left(k_{0}+1\right)\rho\left(T\right)+\sum\limits _{k=0}^{{{k}_{0}}}P\left(\left|^{2}{{\zeta}_{t-{{t}_{k}}}}\right|\ge1,{}^{2}{{\zeta}_{t-{{t}_{k}}}}\left(^{2}\eta\right)\equiv0\right).\nonumber 
\end{align}

Where the last inequality follows from the fact that the events ${{G}_{k}}\left(a\right)$
are mutually exclusive, implying $\sum\limits _{a\in S}{{{G}_{k}}\left(a\right)}\le1$.

Now, by the complete convergence theorem and Proposition 1 from \cite{Cox2009},
we have 
\begin{equation}
\underset{t\rightarrow\infty}{\limsup}P\left(\left|^{2}{{\zeta}_{t-{{t}_{k}}}}\right|\ge1,{}^{2}{{\zeta}_{t-{{t}_{k}}}}\left(^{2}\eta\right)=0\right)\le\bar{v}\left(\left\{ \gamma:\gamma{{\cap}^{2}}\eta=\varnothing\right\} \right)\le\delta_{L},\label{eq:limsup,deltaL}
\end{equation}

where ${{\delta}_{L}}$ tends to zero as $L\to\infty$(recall that
$\left|^{2}\eta\right|\ge L$).

\ref{eq:BIGINEQ} together with \ref{eq:limsup,deltaL} and Fatou\textquoteright s
lemma yield 
\begin{equation}
\underset{t\to\infty}{\limsup}P\left(\overline{\xi}_{t}^{x}\ne\varnothing,A_{t}^{x}\cap{}^{2}\eta=\varnothing\right)\le{{\left(1-\alpha{{p}_{0}}\right)}^{{{k}_{0}}}}+\left(k_{0}+1\right)\left(\rho\left(T\right)+{{\delta}_{L}}\right).\label{limsup almost done}
\end{equation}
 Taking $L\to\infty$ and using Fatou\textquoteright s lemma once
more it follows from \ref{eq:2>L}, \ref{eq:total prob} and \ref{limsup almost done}
that

\begin{align}
 & \underset{t\to\infty}{\limsup}P\left({{\xi}_{t+u}}\left(x\right)=1\right)\\
= & \underset{L\to\infty}{\limsup}\,\underset{t\to\infty}{\limsup}P\left({{\xi}_{t+u}}\left(x\right)=1,\left|^{2}{{\xi}_{u}}\right|\ge L\right)\nonumber \\
\le & \underset{L\to\infty}{\limsup}\int\limits _{\left|^{2}\eta\right|\ge L}{P\left({{\xi}_{u}}\in d\eta\right)\underset{t\to\infty}{\limsup}P\left(\overline{\xi}_{t}^{x}\ne\varnothing,A_{t}^{x}\cap{}^{2}\eta=\varnothing\right)}\nonumber \\
\le & {{\left(1-\alpha{{p}_{0}}\right)}^{{{k}_{0}}}}+\left({{k}_{0}}+1\right)\rho\left(T\right).\nonumber 
\end{align}
 Finally, taking $u\to\infty$ then $T\to\infty$ and finally ${{k}_{0}}\to\infty$
we deduce
\begin{equation}
\underset{u\to\infty}{\lim}\underset{t\to\infty}{\limsup}P\left({{\xi}_{t+u}}\left(x\right)=1\right)=0
\end{equation}
 as required.
\end{proof}
~
\begin{proof}[Proof of Proposition \ref{prop:GW survival}]
Signify some vertex ${{x}_{0}}\in{{T}_{d}}$ as the root and assume
the initial configuration is ${{\xi}_{0}}\left({{x}_{0}}\right)=1$
and zero elsewhere. With some positive probability any other finite
initial configuration with
\[
\left\{ x,{{\xi}_{0}}\left(x\right)=1\right\} \ne\varnothing
\]
 may evolve into the one above, and thus the proof with that specific
initial configuration will be applicable to all others considered.

We will consider ${{x}_{0}}$ to be at level 0 of the tree, and any
vertex $y$ that is removed by $n-1$ distinct vertices from $x_{0}$
to be at level $n$ and we will call the immediate neighbors of $y$
that are at level $n+1$ its children. Its single neighbor at level
$n-1$ will be called its parent. We will denote the level of a vertex
by $l\left(y\right)$. 

For each $y\in\TD$ , denote the first time it is infected with 1
by $t\left(y\right)$, 
\[
t\left(y\right)=\inf\left\{ t|{{\xi}_{t}}\left(y\right)=1\right\} ,
\]
and the time this first arrival of 1 dies or mutates by $s\left(y\right)$,
\[
s\left(y\right)=\inf\left\{ s>t\left(y\right)|{{\xi}_{s}}\left(y\right)\ne1\right\} .
\]
. Note that by the memorylessness property, $s\left(y\right)-t\left(y\right)$
is an exponential r.v. with parameter $\delta+\mu$, independent of
$t\left(y\right)$. 

Let $W_{x_{0}}^{0}$ be the set of all children of ${{x}_{0}}$ that
${{x}_{0}}$ infects with 1 before dying or mutating. That is, 
\[
W_{x_{0}}^{0}=\left\{ y\in\TD|l\left(y\right)=1,t\left(y\right)<s\left(x_{0}\right)\right\} 
\]
 . Further, let $W_{y}^{1}$ for each $y\in W_{x_{0}}^{0}$ be the
set of all its children that it infects before dying or mutating for
the first time, 
\[
W_{y}^{1}=\left\{ z\in\TD|l\left(y\right)=2,t\left(z\right)<s\left(y\right)\right\} .
\]
We continue recursively with these definitions further down the levels
of the tree so that for each $z\in W_{y}^{n-1}$ we have the set 
\[
W_{z}^{n}=\left\{ w\in\TD|l\left(w\right)=n+1,t\left(w\right)<s\left(z\right)\right\} .
\]
Note that $\left|W_{z}^{n}\right|$ is the number of children $z$
infects before dying or mutating for the first time. Also note that
each $\left|W_{z}^{n}\right|$ is determined by different parts of
time-space and thus are all independent. Further, each child is infected
at rate 1 while $z$ dies or mutates at rate $\delta+\mu$ and the
times (beyond $t\left(z\right)$) these occur are independent. Thus
$\left|W_{z}^{n}\right|$ has a binomial distribution with parameters
$\left(d-1,1/\left(1+\delta+\mu\right)\right)$.

From this discussion we can conclude that the following is a Galton
Watson process,
\[
{{S}_{n}}=\sum\limits _{z\in W_{z}^{n-1}}^ {}{\left|W_{z}^{n}\right|}.
\]
For small enough $\delta+\mu$, the expectation of $\left|W_{z}^{n}\right|$
will be larger than 1, so that ${{S}_{n}}$ is supercritical and thus
its survival probability is positive. Since ${{S}_{n}}$ counts some
(albeit not alll) of the vertices $x\in\TD$ such that there exists
$t>0$ with ${{\xi}_{t}}\left(x\right)=1$, and since with positive
probability ${{S}_{n}}\to\infty$, it follows that 
\[
P\left(\left\{ x,{{\xi}_{t}}\left(x\right)=1\right\} \ne\varnothing,\,\forall t>0\right)>0.
\]
\end{proof}
~
\begin{proof}[Proof of Proposition \ref{prop:dominated death}.]
Recalling from the beginning of the proof of Theorem 2 that $^{1}{{\xi}_{t}}$
can be coupled monotonically to$^{1}{{\zeta}_{t}}$ which is a one-type
process with birth rate 1 and death rate $\delta+\mu$. The result
follows when considering that $\delta+\mu\ge{{\delta}_{*}}$ implies
the a.s. extinction of $^{1}{{\zeta}_{t}}$ and that of $^{1}{{\xi}_{t}}$
follows by the monotonicity.
\end{proof}

\section{Notation}

In this section we will provide a summary of the notation used throughout
the paper for reference.

\subsection*{General notation.}
\begin{description}
\item [{$S$,}] the graph on which our processes are defined.
\item [{${{\xi}_{t}}$,}] the mutating contact process.
\item [{$\delta$,}] the per capita death rate, a free parameter.
\item [{$^{i}{{\xi}_{t}}=\left\{ x\in S|{{\xi}_{t}}\left(x\right)=i\right\} $,}] the
set of all vertices occupied by type $i$. in the mutating contact
process.
\item [{${{\zeta}_{t}}$,}] the one type contact process with death rate
$\delta$ and birth rate 1.
\item [{$^{1}{{\zeta}_{t}}$,}] a one-type process coupled to $^{1}{{\xi}_{t}}$
by using the same graphical construction with mutations treated as
deaths.
\item [{$^{2}{{\zeta}_{t}}$,}] a one-type process coupled to $^{2}{{\xi}_{t}}$
by using the same graphical construction and ignoring mutations.
\item [{${{\delta}_{*}},{{\delta}^{*}}$,}] the weak, and strong survival
critical values of the contact process, respectively.
\item [{$\alpha=P\left(\zeta_{t}^{x}\ne\varnothing\,\,\forall t>0\right)$,}] the
probability of survival of the contact process. 
\item [{${{\eta}_{t}}$,}] the two-type contact process (on $S=\mathbb{\ensuremath{Z}}$).
\end{description}

\subsection*{Notation specific to the proof of Theorem 2.}
\begin{description}
\item [{$\hat{\xi}_{t}^{\left(x,s\right)}$,}] the backward time ancestral
process of $x$ at time $s$, $t$ time units in the past. (the list
of possible ancestors of $\left(x,s\right)$ at time $s-t$, in order
of primality).
\item [{$\overline{\xi}_{s}^{x}$,}] the forward time ancestral process,
defined for all $s\ge0$, with the same law as $\hat{\xi}_{s}^{\left(x,t\right)}$
for all $s\le t$. 
\item [{$\overline{\xi}_{t}^{\left(x,s\right)}$,}] the forward time ancestral
process of $\left(x,s\right)$ ($x$ at time $s$), evaluated at time
$t$. 
\item [{$A_{t}^{x}$,}] a list of first priority possible ancestors (which
can only propagate into type 2) of $\left(x,t\right)$.
\item [{$OC\left(\hat{\xi}_{t}^{\left(x,s\right)},\eta\right)$,}] the
event that the first occupied vertex of $\eta$ in the ancestor list
$\hat{\xi}_{t}^{\left(x,s\right)}$ is 1-viable and is actually occupied
by a 1 in the configuration $\eta$. 
\end{description}

\section*{Acknowledgment}

G.A. was supported by the Israel Science Foundation grant \#575/16
and the German Israeli Foundation grant \#I-1363-304.6/2016. 

\bibliographystyle{plain}

\end{document}